% This is samplepaper.tex, a sample chapter demonstrating the
% LLNCS macro package for Springer Computer Science proceedings;
% Version 2.20 of 2017/10/04
%
\RequirePackage{amsmath}
\documentclass[runningheads]{llncs}
\usepackage{fancyvrb}
\usepackage[usenames]{color}
\usepackage{amssymb}
\usepackage{amsmath}
\usepackage{amsfonts}
\usepackage{amscd}
\usepackage{graphicx}
% Used for displaying a sample figure. If possible, figure files should
% be included in EPS format.
%
% If you use the hyperref package, please uncomment the following line
% to display URLs in blue roman font according to Springer's eBook style:
\usepackage
[colorlinks=true,
linkcolor=blue,
urlcolor=blue,
filecolor=blue,
citecolor=blue]
{hyperref}
% \renewcommand\UrlFont{\color{blue}\rmfamily}

%\definecolor{webgreen}{rgb}{0,.5,0}
%\definecolor{webbrown}{rgb}{.6,0,0}

\usepackage{color}
\usepackage{graphics}
\usepackage{latexsym}
\usepackage{epsf}

\usepackage{array}
\newcolumntype{L}[1]{>{\raggedright\let\newline\\\arraybackslash\hspace{0pt}}m{#1}}
\newcolumntype{C}[1]{>{\centering\let\newline\\\arraybackslash\hspace{0pt}}m{#1}}
\newcolumntype{R}[1]{>{\raggedleft\let\newline\\\arraybackslash\hspace{0pt}}m{#1}}

\DeclareMathOperator{\Pal}{Pal}

\begin{document}
\title{Repetitions in infinite palindrome-rich words}
\titlerunning{Repetitions in infinite rich words}
% If the paper title is too long for the running head, you can set
% an abbreviated paper title here
%
\author{
Aseem R. Baranwal \orcidID{0000-0001-5318-6054}
\and
Jeffrey Shallit \orcidID{0000-0003-1197-3820}
}

\authorrunning{A. Baranwal and J. Shallit}
% First names are abbreviated in the running head.
% If there are more than two authors, 'et al.' is used.
%
\institute{
    School of Computer Science, University of Waterloo\\
    Waterloo, ON N2L 3G1, Canada\\
    \email{aseem.baranwal@uwaterloo.ca}\\
    \email{shallit@uwaterloo.ca}
}
\maketitle              % typeset the header of the contribution
\begin{abstract}
Rich words are characterized by containing the maximum possible number of distinct palindromes. Several characteristic properties of rich words have been studied; yet the analysis of repetitions in rich words still involves some interesting open problems. We address lower bounds on the repetition threshold of infinite rich words over 2 and 3-letter alphabets, and construct a candidate infinite rich word over the alphabet $\Sigma_2=\{0,1\}$ with a small critical exponent of $2+\sqrt{2}/2$.  This represents the first progress on an open problem of Vesti from 2017.

\keywords{Critical exponent \and Repetitions \and Rich words \and Palindrome}
\end{abstract}
\section{Introduction}
Palindromes---words equal to their reversal---are among the most widely studied repetitions in words. The class of palindrome-rich words, or simply rich words---those words containing the maximum possible number of palindromes---was introduced in the papers \cite{Brlek&Hamel&Nivat&Reutenauer:2004,deLuca&Glen&Zamboni:2008,Glen&Justin&Widmer&Zamboni:2009}. Since then, rich words have received much attention in the combinatorics on words literature; see, for example, \cite{Bucci&DeLuca&Glen&Zamboni:2009,Guo&Shallit&Shur:2016,Vesti:2014}.

\subsection{Preliminaries}
In this section we provide the preliminary definitions and results that we use throughout the paper, along with the motivation behind our work.
% A word $u=a_1\cdots a_n$ is a \textit{palindrome} if $a_1\cdots a_n=a_n\cdots a_1$.

\begin{definition}
A finite word $w$ is {\em rich} if it contains $|w|$ distinct nonempty palindromes. An infinite word $w$ is rich if all its factors are rich.
\end{definition}

We say that a word $u=z^e$ has \textit{exponent} $e$ and \textit{period} $p=|z|$, where $e = |u|/p$ is a positive rational number that denotes the number of times $z$ is repeated. We say $u$ is \textit{primitive} if its only integer exponent is $1$.
The word $w$ is an \textit{overlap} if $w=uuu'$ where $u'$ is a prefix of $u$.

% If $u$ is a finite nonempty word, then $u^\omega$ denotes the infinite word $uuu\cdots $.

\begin{example}
The word $u=00010001$ is rich, because it has 8 distinct nonempty palindromes as factors, while the word $v=00101100$ is not rich. The word $u$ has period 4 and exponent 2, since $u=z^e$, where $z=0001$ and $e=2$.
\end{example}

\begin{definition}
For a given alphabet $\Sigma$, a mapping $\varphi$ on $\Sigma^*$ is an {\em antimorphism} if $\varphi(uv) = \varphi(v)\varphi(u)$ for all $v,w \in \Sigma^*$.
\end{definition}

\begin{definition}
The {\em critical exponent} of an infinite word $w$ is defined to be the supremum of the set of all rational numbers $e$ such that there exists a finite nonempty factor of $w$ with exponent $e$.
\end{definition}

\begin{definition}
The {\em repetition threshold} on an alphabet of size $k$ is the infimum of the set of exponents $e$ such that there exists an infinite word that avoids greater than $e$-powers.
\end{definition}

In other words, the repetition threshold is the smallest possible critical exponent of a word over an alphabet of size $k$. Dejean gave a famous conjecture about this threshold in~\cite{Dejean:1972}, which was proven by Currie and Rampersad~\cite{Currie&Rampersad:2011}, and independently by Rao~\cite{Rao:2011}. The repetition threshold can also be studied for a limited class of infinite words. For example, Rampersad et al. studied this threshold for infinite balanced words in~\cite{Rampersad:2018}. In this paper, we study the repetition threshold $RT(k)$ for infinite rich words over an alphabet of size $k$.

\subsection{Previous work}
% DEFINE THESE TERMS FIRST.
Let the word $w$ be the fixed point of a given involutive antimorphism $\Theta$. We say $w$ is a \textit{$\Theta$-palindrome} if $w = \Theta(w)$. The set of $\Theta$-palindromic factors of a word $w$ is denoted by $\Pal_\Theta(w)$. In 2013, Pelantov\'a and Starosta introduced the idea of $\Theta$-palindromic defect.

% The number of elements in the set $\Pal\Theta(w)$ is bounded by $$|\Pal_\Theta(w)| \le |w| + 1 - \gamma_\Theta(w),$$
% where $\gamma_\Theta(w)$ = $|\{(a, \Theta(a)): a \in \Sigma\text{, a occurs in } w\text{ and }a\neq\Theta(a)\}|$.

\begin{definition}
The {\em $\Theta$-palindromic defect} of a finite word $w$, denoted by $D_\Theta(w)$, is defined as
$$D_\Theta(w) = |w| + 1 - \gamma_\Theta(w) - |\Pal_\Theta(w)|,$$
where $\gamma_\Theta(w)$ = $|\big\{\{a, \Theta(a)\}: a \in \Sigma\text{, a occurs in } w\text{ and }a\neq\Theta(a)\big\}|$.
\end{definition}

Further, they proved that all recurrent words with a finite $\Theta$-palindromic defect contain infinitely many overlapping factors~\cite{Pelantova&Starosta:2013}. This result leads to the following theorem \cite{Pelantova&Starosta:2013}.

\begin{theorem}
All infinite rich words contain a square.
\label{one}
\end{theorem}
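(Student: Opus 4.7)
The plan is to apply the Pelantov\'a-Starosta overlap theorem cited just above, after handling two small preliminary points: computing the relevant defect and passing to a recurrent word. I would take $\Theta = R$, the reversal antimorphism on $\Sigma^*$, whose fixed points are exactly the ordinary palindromes. Since $R$ acts as the identity on every letter of $\Sigma$, the quantity $\gamma_R(u)$ vanishes for every word $u$ over $\Sigma$, and richness of a finite word $u$ means $|\Pal_R(u)| = |u|+1$ once the empty palindrome is included; substituting into the defining formula then yields $D_R(u) = 0$. Consequently every factor of an infinite rich word has $R$-palindromic defect zero, in particular finite.

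Let $w$ be an infinite rich word. To invoke the overlap theorem I need recurrence, which is not automatic for an arbitrary infinite word, so I would replace $w$ by an infinite word $w'$ drawn from a minimal subshift of the shift-orbit closure of $w$; such a subshift exists by a routine Zorn's lemma argument and its elements are uniformly recurrent. Every factor of $w'$ occurs in $w$ and is therefore rich, so $w'$ itself is a recurrent rich word with finite $R$-palindromic defect. Applying the Pelantov\'a-Starosta theorem to $w'$ then produces an overlap $uuu'$ (with $u'$ a nonempty prefix of $u$) as a factor of $w'$, and this overlap contains the square $uu$; since factors of $w'$ are factors of $w$, this square occurs in $w$ as well.

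The main obstacle I anticipate is the recurrence reduction: rich words need not themselves be recurrent, so one must verify that the minimal-subshift trick both supplies recurrence and preserves richness (the latter being automatic because richness is a factor-closed property). Once this is in place, the conclusion is immediate, and no deeper combinatorial analysis of palindromes in $w$ is required beyond the one-line defect computation.
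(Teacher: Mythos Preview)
Your proposal is correct and follows essentially the same route as the paper: the paper does not prove Theorem~\ref{one} independently but simply records it as a consequence of the Pelantov\'a--Starosta result that recurrent words with finite $\Theta$-palindromic defect have infinitely many overlapping factors. Your write-up supplies the two details the paper leaves implicit---the computation $D_R(u)=0$ for rich $u$, and the passage to a uniformly recurrent word via a minimal subshift so that the cited theorem applies---and both are handled correctly (in particular, richness is factor-closed and factors of any point in the shift-orbit closure of $w$ are factors of $w$, so the square found in $w'$ indeed lies in $w$).
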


\autoref{one} provides a lower bound on the repetition threshold for infinite rich words over a $k$-letter alphabet; namely $RT(k) \ge 2$. In~\cite{Vesti:2017}, Vesti gives both upper and lower bounds on the length of the longest square-free rich words, and proposes the open problem of determining the repetition threshold for infinite rich words.

\section{Results over the binary alphabet}
We construct an infinite binary rich word and determine the value of its critical exponent. We further conjecture that this value is the repetition threshold for the binary alphabet, based on supporting evidence from computation. We define the word ${\bf r}$ as the image of a fixed point, ${\bf r}=\tau(\varphi^\omega(0))=001001100100110\cdots$, where the morphisms $\varphi$ and $\tau$ are defined as follows:

\begin{center}
\begin{tabular}{ r l C{3cm} r l }
 $\varphi$: & $0 \to 01$ & & $\tau$: & $0 \to 0$ \\
         & $1 \to 02$ & & & $1 \to 01$ \\
         & $2 \to 022$, & & & $2 \to 011$.
\end{tabular}
\end{center}

\subsection{Automatic theorem-proving}
We utilize the automatic theorem-proving software \texttt{Walnut}, written by Hamoon Mousavi, to constructively decide first-order predicates concerning the word ${\bf r}$ \cite{Mousavi:2016}. To enable \texttt{Walnut} to work with the word ${\bf r}$, we require an automaton with output that produces ${\bf r}$. Computing the lengths $L_i=|\tau(\varphi^i(0))|$ for $i\ge 0$, we note that
$$L_0 = 1,\ L_1 = 3,\ \text{and } L_i = 2L_{i-1}+L_{i-2} \text{ for } i\ge 2.$$
Since the Pell numbers are defined by the recurrence $P_0=0$, $P_1=1$, and $P_n=2P_{n-1}+P_{n-2}$, this suggests that the word ${\bf r}$ is \textit{Pell-automatic}, meaning that there exists an automaton that takes as input an integer $N$ represented in the Pell number system, and outputs the symbol in ${\bf r}$ at index $N$. The Pell number system is a non-standard positional number system in the family of Ostrowski numeration systems~\cite{Ostrowski:1922}. We utilize the Pell adder constructed in~\cite{Baranwal&Shallit:2019} to enable writing predicates in this number system. The \texttt{Walnut} version equipped with the adder is available on \href{https://github.com/aseemrb/Walnut/}{GitHub}.\footnote{Repository: \url{https://github.com/aseemrb/Walnut/} .}

\subsection{Constructing the automaton}
Using the methods of Angluin~\cite{Angluin:1987}, we construct an automaton with output for the word ${\bf r}$.    \autoref{fig:r:unrestricted} represents the automaton. Note that this automaton consists of 4 states, and we have not restricted the Pell representations to be unique for each integer, meaning that the input may end with a 2, and a non-zero digit may follow a 2. The node labels in the figure represent the state and the corresponding output symbol.

\begin{figure}[ht]
    \center{\includegraphics[width=0.8\textwidth]{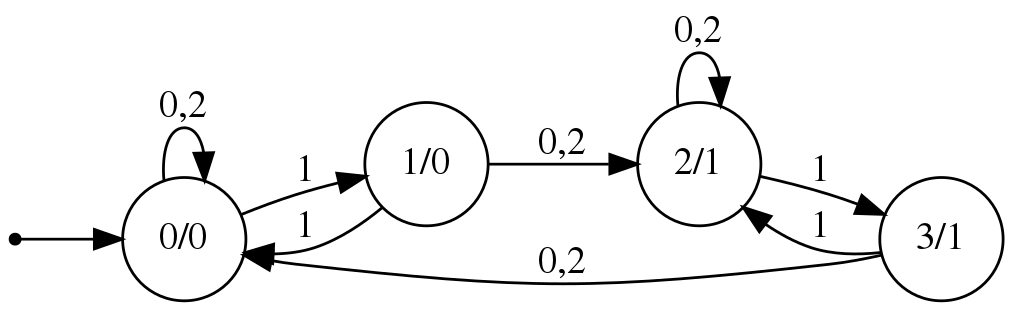}}
    \caption{Automaton for the infinite word ${\bf r}$.}
    \label{fig:r:unrestricted}
\end{figure}

Before we proceed, we prove that this automaton produces the same word as given by $\tau(\varphi(0))$. To do this, we restrict the automaton in \autoref{fig:r:unrestricted} to only consider unique integer representations in the Pell number system. Thus, the least significant digit is $< 2$, and a 2 is always followed by a 0. This gives the automaton in \autoref{fig:r:restricted}, which represents ${\bf r}=g(f^\omega(0))$ for morphisms $f$ and $g$, given by

\begin{center}
\begin{tabular}{ r l C{3cm} r l }
 $f$: & $0 \to 012$ & & $g$: & $0 \to 0$ \\
      & $1 \to 304$ & & & $1 \to 0$ \\
      & $2 \to 0$ & & & $2 \to \epsilon$ \\
      & $3 \to 354$ & & & $3 \to 1$ \\
      & $4 \to 3$ & & & $4 \to \epsilon$ \\
      & $5 \to 032,$ & & & $5 \to 1$.
\end{tabular}
\end{center}

\begin{figure}[ht]
    \center{\includegraphics[width=\textwidth]{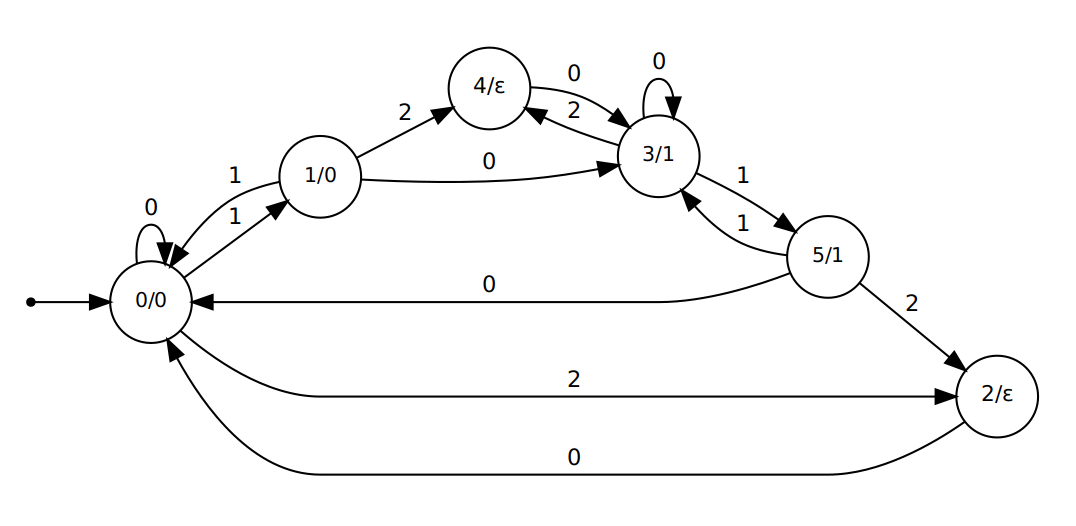}}
    \caption{Restricted automaton for the infinite word ${\bf r}$. Here $\epsilon$ denotes the empty word.}
    \label{fig:r:restricted}
\end{figure}

\subsection{Proof of equivalence of the morphisms}
In this section, we prove that the automaton in \autoref{fig:r:restricted} produces the same infinite word as that produced by morphisms $\varphi$ and $\tau$. We need two lemmas to prove this equivalence.

\begin{lemma}\label{lem:g:f:0}
For all $n\ge 2$, we have $g(f^n(0))=g(f^{n-1}(0))g(f^{n-2}(3))g(f^{n-1}(0))$.
\end{lemma}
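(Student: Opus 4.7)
My strategy is induction on $n$, exploiting the decomposition $f^n(0) = f^{n-1}(012)$ together with the simple action of $f$ on the letters $1$, $2$, and $4$ (namely $f(1)=304$, $f(2)=0$, $f(4)=3$).

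\emph{Base case $(n = 2)$.} From the definitions, $f^2(0) = 012 \cdot 304 \cdot 0 = 0123040$, so $g(f^2(0)) = 00 \cdot 10 \cdot 0 = 00100$. On the other hand, $g(f(0)) \cdot g(3) \cdot g(f(0)) = 00 \cdot 1 \cdot 00 = 00100$, which matches.

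\emph{Inductive step $(n \ge 3)$.} From $f(0) = 012$ we obtain
\[
g(f^n(0)) = g(f^{n-1}(0)) \cdot g(f^{n-1}(1)) \cdot g(f^{n-1}(2)).
\]
Using $f(1) = 304$ to expand the middle factor, the identity $f^{n-1}(2) = f^{n-2}(0)$ coming from $f(2) = 0$, and (since $n \ge 3$) the identity $f^{n-2}(4) = f^{n-3}(3)$ coming from $f(4) = 3$, this becomes
\[
g(f^n(0)) = g(f^{n-1}(0)) \cdot g(f^{n-2}(3)) \cdot \bigl[g(f^{n-2}(0)) \, g(f^{n-3}(3)) \, g(f^{n-2}(0))\bigr].
\]
By the inductive hypothesis applied at index $n - 1 \ge 2$, the bracketed factor equals $g(f^{n-1}(0))$, which yields the desired identity.

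The proof is essentially a bookkeeping exercise in morphism composition, and I do not foresee a serious obstacle. The only care required is to track the lower bounds on $n$ for each intermediate simplification: rewriting $f^{n-2}(4)$ as $f^{n-3}(3)$ and invoking the inductive hypothesis at $n - 1$ both need $n \ge 3$, which is precisely why the case $n = 2$ must be verified directly.
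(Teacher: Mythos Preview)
Your proof is correct and follows essentially the same route as the paper: both argue by induction on $n$, expand $g(f^n(0))$ via $f(0)=012$, $f(1)=304$, $f(2)=0$, $f(4)=3$, and then collapse the trailing block $g(f^{n-2}(0))\,g(f^{n-3}(3))\,g(f^{n-2}(0))$ to $g(f^{n-1}(0))$ using the inductive hypothesis. The only cosmetic difference is indexing (the paper assumes the statement for $k\le n$ and proves it for $n+1$, whereas you assume it for $n-1$ and prove it for $n$); your explicit attention to the range $n\ge 3$ needed for the rewriting $f^{n-2}(4)=f^{n-3}(3)$ is a nice touch.
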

\begin{proof}
We prove this by induction on $n$. For $n=2$, we have that
$$g(f^2(0)) = g(f^1(0))g(3)g(f^1(0))=00100.$$
So the base case holds. Next, we construct the induction hypothesis,
$$H_1: g(f^k(0)) = g(f^{k-1}(0))g(f^{k-2}(3))g(f^{k-1}(0)), \forall k \le n.$$
For the inductive step, consider $g(f^{n+1}(0))$. Using the definition of the morphisms $f$ and $g$, we have that,
\begin{align}
  g(f^{n+1}(0)) &= g(f^n(0))g(f^n(1))g(f^n(2)) \nonumber\\
                &= g(f^n(0))g(f^{n-1}(3))g(f^{n-1}(0))g(f^{n-1}(4))g(f^n(2)) \nonumber\\
                &= g(f^n(0))g(f^{n-1}(3))g(f^{n-1}(0))g(f^{n-2}(3))g(f^{n-1}(0)).\label{Hypo:G:F:0}
\end{align}
Using the induction hypothesis $H_1$ in Eq.~\eqref{Hypo:G:F:0}, we get
$$g(f^{n+1}(0)) = g(f^n(0))g(f^{n-1}(3))g(f^n(0)).$$
This completes the proof.
\end{proof}

\begin{lemma}\label{lem:g:f:3}
For all $n\ge 2$, $g(f^n(3)) = g(f^{n-1}(3))g(f^{n-2}(0))g(f^{n-1}(3))$.
\end{lemma}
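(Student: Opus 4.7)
The plan is to mirror the argument for Lemma~\ref{lem:g:f:0} almost verbatim, with the roles of $0$ and $3$ exchanged, and to proceed by strong induction on $n$.

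For the base case $n = 2$, I would compute $f^2(3)$ by hand: $f(3) = 354$, so $f^2(3) = f(3)f(5)f(4) = 354 \cdot 032 \cdot 3$. Applying $g$ letter by letter (using $g(3)=g(5)=1$, $g(0)=1$ is wrong—actually $g(0)=0$, $g(2)=g(4)=\epsilon$) yields an explicit string, which I would check equals $g(f(3)) \cdot g(f^0(0)) \cdot g(f(3))$, also computed explicitly. This establishes the base case and also confirms that the reader sees which letters of $\{0,\ldots,5\}$ are ``silent'' under $g$.

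For the inductive step, assume as hypothesis $H$ the stated equality $g(f^k(3)) = g(f^{k-1}(3)) g(f^{k-2}(0)) g(f^{k-1}(3))$ for all $k \le n$. Expanding via the definition of $f$ on the letter $3$, I would write
\begin{align*}
g(f^{n+1}(3)) &= g(f^n(3)) \, g(f^n(5)) \, g(f^n(4)).
\end{align*}
Now I peel off one more layer of $f$ on the right two factors: since $f(5) = 032$ and $f(4) = 3$, this becomes
\begin{align*}
g(f^{n+1}(3)) &= g(f^n(3)) \, g(f^{n-1}(0)) \, g(f^{n-1}(3)) \, g(f^{n-1}(2)) \, g(f^{n-1}(3)).
\end{align*}
The key collapse is that $f(2) = 0$, whence $g(f^{n-1}(2)) = g(f^{n-2}(0))$. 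Substituting this and then invoking $H$ (with $k=n$) on the trailing block $g(f^{n-1}(3)) g(f^{n-2}(0)) g(f^{n-1}(3)) = g(f^n(3))$ yields
\begin{align*}
g(f^{n+1}(3)) &= g(f^n(3)) \, g(f^{n-1}(0)) \, g(f^n(3)),
\end{align*}
which is exactly the claim for $n+1$.

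There is no serious obstacle here; the work is purely symbolic bookkeeping. The one place a reader might slip is the step $g(f^{n-1}(2)) = g(f^{n-2}(0))$, which uses $f(2) = 0$ and requires $n \ge 2$ (matching the stated range); I would call this step out explicitly so the argument parallels Lemma~\ref{lem:g:f:0} cleanly and makes visible the ``duality'' between the letters $0$ and $3$ induced by $f(5) = 032$ versus $f(1) = 304$.
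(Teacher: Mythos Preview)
Your proposal is correct and matches the paper's own proof essentially line for line: the same base case, the same strong induction hypothesis, and the same three-step expansion using $f(3)=354$, $f(5)=032$, $f(4)=3$, $f(2)=0$, followed by collapsing the trailing block via the hypothesis at $k=n$. The only difference is cosmetic: you fold the substitution $g(f^n(4))=g(f^{n-1}(3))$ into the second displayed line, whereas the paper defers it to the third.
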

\begin{proof}
The proof is similar to that of \autoref{lem:g:f:0}, by induction on $n$. For $n=2$, we have
$$g(f^2(3)) = g(f^1(3))g(0)g(f^1(3))=11011.$$
So the base case holds. We have the induction hypothesis,
$$H_2: g(f^k(3)) = g(f^{k-1}(3))g(f^{k-2}(0))g(f^{k-1}(3)), \forall k \le n.$$
For the inductive step, consider $g(f^{n+1}(3))$. Using the definition of the morphisms $f$ and $g$, we have that
\begin{align}
  g(f^{n+1}(3)) &= g(f^n(3))g(f^n(5))g(f^n(4)) \nonumber\\
                &= g(f^n(3))g(f^{n-1}(0))g(f^{n-1}(3))g(f^{n-1}(2))g(f^n(4)) \nonumber\\
                &= g(f^n(3))g(f^{n-1}(0))g(f^{n-1}(3))g(f^{n-2}(0))g(f^{n-1}(3)).\label{Hypo:G:F:3}
\end{align}
Using the induction hypothesis $H_2$ in Eq.~\eqref{Hypo:G:F:3}, we get
$$g(f^{n+1}(3)) = g(f^n(3))g(f^{n-1}(0))g(f^n(3)).$$
This completes the proof.
\end{proof}

Now we prove the following equivalence theorem about the words produced by the automaton in \autoref{fig:r:restricted} and the word given by morphisms $\varphi$ and $\tau$.
\begin{theorem}
The infinite words $\tau(\varphi^\omega(0))$ and $g(f^\omega(0))$ are equal.
\end{theorem}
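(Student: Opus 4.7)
The plan is to prove a stronger statement by simultaneous induction, from which the equality of the two infinite words follows immediately by a prefix-limit argument. For brevity, write $A_n = g(f^n(0))$ and $D_n = g(f^n(3))$. The direct calculations above give $|\tau(\varphi^n(0))| = L_n$ with $L_0=1$, $L_1=3$, $L_n=2L_{n-1}+L_{n-2}$, while the recurrences of Lemmas~\ref{lem:g:f:0} and~\ref{lem:g:f:3} give $|A_n|=|D_n|$ satisfying $a_n = 2a_{n-1}+a_{n-2}$ with $a_0=1$, $a_1=2$. These sequences are different, so we cannot hope for $\tau(\varphi^n(0)) = g(f^n(0))$. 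A short experiment at $n = 1, 2, 3$ suggests instead the identities
\begin{align*}
\tau(\varphi^n(0)) &= A_n D_{n-1}, \\
\tau(\varphi^n(1)) &= A_n D_n, \\
\tau(\varphi^n(2)) &= A_n D_n A_{n-1} D_n,
\end{align*}
for every $n \ge 1$, and this triple of formulas is exactly what I would prove.

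I would verify the three formulas by simultaneous strong induction on $n$. The base case $n=1$ is a direct computation from the definitions of $\varphi$, $\tau$, $f$, and $g$. For the inductive step, I would use the identities $\varphi(0)=01$, $\varphi(1)=02$, and $\varphi(2)=022$ to write $\tau(\varphi^{n+1}(x)) = \tau(\varphi^n(\varphi(x)))$ as a concatenation of $\tau(\varphi^n(0))$, $\tau(\varphi^n(1))$, and $\tau(\varphi^n(2))$, apply the induction hypothesis to each factor, and then refold the resulting product of $A_n$'s and $D_n$'s using the palindromic recurrences
\[
A_{n+1} = A_n D_{n-1} A_n \quad \text{and} \quad D_{n+1} = D_n A_{n-1} D_n
\]
provided by Lemmas~\ref{lem:g:f:0} and~\ref{lem:g:f:3}. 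Each of the three cases reduces to a straightforward associative rearrangement.

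Once the three formulas are established, the theorem follows quickly. From $\tau(\varphi^n(0)) = A_n D_{n-1}$ we get that $A_n = g(f^n(0))$ is a prefix of $\tau(\varphi^n(0))$ for every $n\ge 1$. Since Lemma~\ref{lem:g:f:0} also shows $g(f^n(0))$ is a prefix of $g(f^{n+1}(0))$, the infinite word $g(f^\omega(0))$ is the limit of the $g(f^n(0))$; similarly $\tau(\varphi^\omega(0))$ is the limit of $\tau(\varphi^n(0))$, which has $g(f^n(0))$ as a prefix. Thus the two infinite words agree on arbitrarily long prefixes and are equal.

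The main obstacle is the initial one of discovering the correct triple of closed-form identities: because the iterates $\varphi^n(0)$ and $f^n(0)$ grow at different rates and live over alphabets of different sizes, there is no direct letter-to-letter correspondence, so we must find a system of formulas in $A_n$ and $D_n$ that is simultaneously consistent with the substitution rules of $\varphi$ and with the self-similar recurrences for $A_n$ and $D_n$. Once the formulas are guessed (and checked on a couple of small cases), the verification is mechanical.
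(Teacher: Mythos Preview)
Your proposal is correct and is essentially the same argument as the paper's: the paper proves by simultaneous induction the three identities $\tau(\varphi^k(0)) = A_k D_{k-1}$, $\tau(\varphi^k(1)) = A_k D_k$, and $\tau(\varphi^k(2)) = A_k D_{k+1}$, and your third formula $A_n D_n A_{n-1} D_n$ is just $A_n D_{n+1}$ unfolded via Lemma~\ref{lem:g:f:3}. Your concluding prefix-limit argument is a bit more carefully stated than the paper's, but the overall structure is identical.
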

\begin{proof}
We prove this by a simultaneous induction on $n$ with 3 hypotheses.
\begin{align}
\label{P1}
\tau(\varphi^k(0)) &= g(f^k(0))g(f^{k-1}(3)) \\
\label{P2}
\tau(\varphi^k(1)) &= g(f^k(0))g(f^k(3)) \\
\label{P3}
\tau(\varphi^k(2)) &= g(f^k(0))g(f^{k+1}(3))
\end{align}
The base case $k=1$ can be checked by hand. Assume that the hypotheses hold for $k\le n$. Next, we consider the following inductive steps using the definitions of $\varphi$ and $\tau$.
\begin{align*}
    \tau(\varphi^{n+1}(0)) &=  \tau(\varphi^n(0))\tau(\varphi^n(1)) \\
                        &=  g(f^n(0))g(f^{n-1}(3))g(f^n(0))g(f^n(3)) & \text{using~(\ref{P1},\ref{P2})}\\
                        &=  g(f^{n+1}(0))g(f^n(3)). & \text{using \autoref{lem:g:f:0}}.
\end{align*}
\begin{align*}
    \tau(\varphi^{n+1}(1)) &=  \tau(\varphi^n(0))\tau(\varphi^n(2)) \\
                        &=  g(f^n(0))g(f^{n-1}(3))g(f^n(0))g(f^{n+1}(3)) & \text{using~(\ref{P1},\ref{P3})}\\
                        &=  g(f^{n+1}(0))g(f^{n+1}(3)) & \text{using \autoref{lem:g:f:0}}.
\end{align*}
\begin{align*}
    \tau(\varphi^{n+1}(2)) &=  \tau(\varphi^n(0))\tau(\varphi^n(2))\tau(\varphi^n(2)) \\
                        &=  g(f^n(0))g(f^{n-1}(3))g(f^n(0))g(f^{n+1}(3))g(f^n(0))g(f^{n+1}(3)) \\
                        &=  g(f^{n+1}(0))g(f^{n+2}(3)) \text{\qquad\qquad\qquad\qquad using Lemmas \ref{lem:g:f:0}, \ref{lem:g:f:3}}.
\end{align*}
This proves that the hypotheses are true. From Eq.~\eqref{P1}, we have $\tau(\varphi^k(0))=g(f^k(0))g(f^{k-1}(3))$. Letting $n \rightarrow \infty$, we get  $\tau(\varphi^\omega(0))=g(f^\omega(0))$. This completes the proof.
\end{proof}

\subsection{Proof of palindromic richness}
We claim that the infinite word ${\bf r}=g(f^\omega(0)) = 001001100100110\cdots$ is rich. The proof is carried out using \texttt{Walnut} by constructing a set of predicates based on \autoref{Thm:RichPrefix}, as done in~\cite{Schaeffer&Shallit:2016}. We say that a word $w$ has a \textit{unioccurrent} suffix $s$ if $s$ is not a factor of any proper prefix of $w$.

\begin{theorem}\label{Thm:RichPrefix}
(Glen et al. \cite{Glen&Justin&Widmer&Zamboni:2009})
A word $w$ is rich if and only if every prefix of $w$ has a unioccurrent palindromic suffix.
\end{theorem}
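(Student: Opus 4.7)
The plan is to deduce the theorem from a \emph{one-letter extension lemma}: for any finite word $u$ and letter $a$, the word $ua$ has at most one new distinct palindromic factor compared with $u$, and when such a new factor exists, it must equal the longest palindromic suffix (LPS) of $ua$. Iterating this lemma from the empty word yields the stated biconditional.

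The core of the argument is proving the extension lemma. Let $v'$ denote the LPS of $ua$, and suppose $v$ is a palindromic factor of $ua$ that does not already occur in $u$. Since $v$ is a factor of $ua$ but not of $u$, every occurrence of $v$ in $ua$ must end at the final letter, so $v$ is a palindromic suffix of $ua$ and therefore $|v| \le |v'|$. Assume for contradiction that $|v| < |v'|$. Then $v$ is a proper suffix of $v'$. Because $v'$ is a palindrome, a suffix of $v'$ of length $|v|$ corresponds under reversal to a prefix of $v'$ of length $|v|$, and since $v$ equals its own reversal, $v$ is a proper prefix of $v'$ as well. But that occurrence of $v$ lies entirely within the first $|u|$ letters of $ua$, contradicting the assumption that $v$ does not occur in $u$. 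Hence $v=v'$, which proves the lemma.

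With the lemma in hand, the theorem follows cleanly. A new palindromic factor appears on passing from $u$ to $ua$ if and only if the LPS of $ua$ does not occur in $u$, which is the same as saying that this LPS is a unioccurrent palindromic suffix of $ua$. Starting from the empty word, which has $0$ distinct nonempty palindromic factors, and noting that each one-letter extension increases the count of distinct palindromic factors by at most $1$, a word $w$ of length $n$ attains the maximal count $n$ (that is, is rich) if and only if every nonempty prefix of $w$ contributes a new palindrome, equivalently, every nonempty prefix of $w$ has a unioccurrent palindromic suffix. The main obstacle is the extension lemma; once that combinatorial fact about palindromic suffixes of a palindrome is in place, the biconditional drops out by a routine induction on $|w|$.
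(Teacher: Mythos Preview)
Your argument is correct and is essentially the classical proof of this characterization (originally due to Droubay, Justin, and Pirillo, and restated by Glen et al.). Note, however, that the present paper does \emph{not} give its own proof of this theorem: it is quoted as a known result from \cite{Glen&Justin&Widmer&Zamboni:2009} and used as a black box to build the \texttt{Walnut} predicates. So there is nothing in the paper to compare your proof against.

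One small point worth tightening: in your final paragraph you pass silently between ``the longest palindromic suffix of $ua$ is unioccurrent'' and ``$ua$ has \emph{some} unioccurrent palindromic suffix.'' For the direction you actually need (rich $\Rightarrow$ unioccurrent palindromic suffix), the LPS itself witnesses the claim, so this is fine. For the converse you only need the trivial observation that any unioccurrent palindromic suffix of $ua$ is, in particular, a palindrome of $ua$ not occurring in $u$, hence a new palindromic factor; you use this implicitly but it would be cleaner to state it. With that one sentence added, the induction on $|w|$ is airtight.
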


In the following predicates, \texttt{R} denotes the automaton in \autoref{fig:r:restricted}. First, we introduce the fundamental predicates that form the building blocks for verification of the richness property.

\begin{enumerate}
\item The predicate \texttt{FactorEq} takes 3 parameters $i,j,n$ and evaluates to true if the length-$n$ factors of ${\bf r}$ starting at indices $i$ and $j$ are equal.

\item The predicate \texttt{Occurs} takes 4 parameters $i,j,m,n$ and evaluates to true if the length-$m$ factor of ${\bf r}$ starting at index $i$ occurs in the length-$n$ factor starting at index $j$, i.e., $\texttt{R}[i..i+m-1]$ is a factor of $\texttt{R}[j..j+n-1]$.

\item The predicate \texttt{Palindrome} takes 2 parameters $i,n$ and evaluates to true if the length-$n$ factor of ${\bf r}$ starting at index $i$ is a palindrome.
\end{enumerate}

\begin{Verbatim}[numbers=left,xleftmargin=5mm,tabsize=2]
def FactorEq "?msd_pell Ak (k < n) => (R[i + k] = R[j + k])";
def Occurs "?msd_pell (m <= n) &
    (Ek (k + m <= n) & $FactorEq(i, j + k, m))";
def Palindrome "?msd_pell Aj,k ((k < n) & (j + k + 1 = n)) =>
    (R[i + k] = R[i + j])";
\end{Verbatim}

By \autoref{Thm:RichPrefix}, for any finite word to be rich, it is sufficient to check if all its prefixes have a unioccurrent palindromic suffix. We use this property to construct the predicate \texttt{RichFactor} which takes two parameters $i,n$, and evaluates to true if the length-$n$ factor of ${\bf r}$ starting at index $i$ is rich. \autoref{fig:richfactor:predicate} shows the representation of variables in the predicate.

\begin{Verbatim}[numbers=left,xleftmargin=5mm,tabsize=2]
def RichFactor "?msd_pell
    Am ((m >= 1) & (m < n)) =>
        (Ej (i <= j) & (j < i + m) &
         $Palindrome(j, i + m - j) &
         ~$Occurs(j, i, i + m - j, m - 1))";
\end{Verbatim}

\begin{figure}[ht]
    \center{\includegraphics[width=0.8\textwidth]{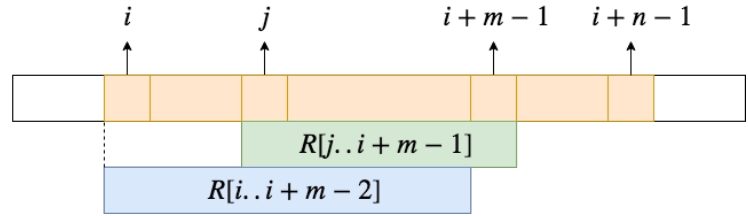}}
    \caption{Representation of variables $i,j,m,n$ in the predicate \texttt{RichFactor}. It evaluates to true if the word $\texttt{R}[i..i+n-1]$ is rich.}
    \label{fig:richfactor:predicate}
\end{figure}

Now, we simply check that all prefixes of ${\bf r}$ are rich to show that the infinite word ${\bf r}$ is rich. The following predicate, \texttt{R\_Is\_Rich} evaluates to true, which completes the proof.
\begin{Verbatim}[numbers=left,xleftmargin=5mm,tabsize=2]
eval R_Is_Rich "?msd_pell An $RichFactor(0, n)";
\end{Verbatim}

\subsection{Determining the critical exponent}
To determine the critical exponent, first, we compute the periods $p$ such that a repetition with exponent $\ge 5/2$ and period $p$ occurs in ${\bf r}$.
\begin{Verbatim}[numbers=left,xleftmargin=5mm,tabsize=2]
eval HighPowPeriods "?msd_pell (p >= 1) &
    (Ei Aj (2*j <= 3*p) => R[i + j] = R[i + j + p])":
\end{Verbatim}
The language accepted by the produced automaton is $0^*1100^*$, which is the Pell-base representation of numbers of the form $P_n + P_{n-1}$, for $n \ge 3$.
Next, we compute pairs of integers $(n, p)$ such that ${\bf r}$ has a factor of length $n+p$ with period $p$, and this factor cannot be extended to a longer factor of length $n+p+1$ with the same period.
\begin{Verbatim}[numbers=left,xleftmargin=5mm,tabsize=2]
def MaximalReps "?msd_pell Ei
    (Aj (j < n) => R[i + j] = R[i + j + p]) &
    (R[i + n] != R[i + n + p])";
\end{Verbatim}
Finally, we compute the pairs $(n, p)$ where $p$ matches the regular expression $0^*1100^*$ in the Pell base representation, and $n+p$ is the maximum possible length of any factor with period $p$.
\begin{Verbatim}[numbers=left,xleftmargin=5mm,tabsize=2]
eval HighestPowers "?msd_pell
    $HighPowPeriods(p) &
    $MaximalReps(n, p) &
    (Am $MaximalReps(m, p) => m <= n)";
\end{Verbatim}

\begin{figure}[ht]
    \center{\includegraphics[width=\textwidth]{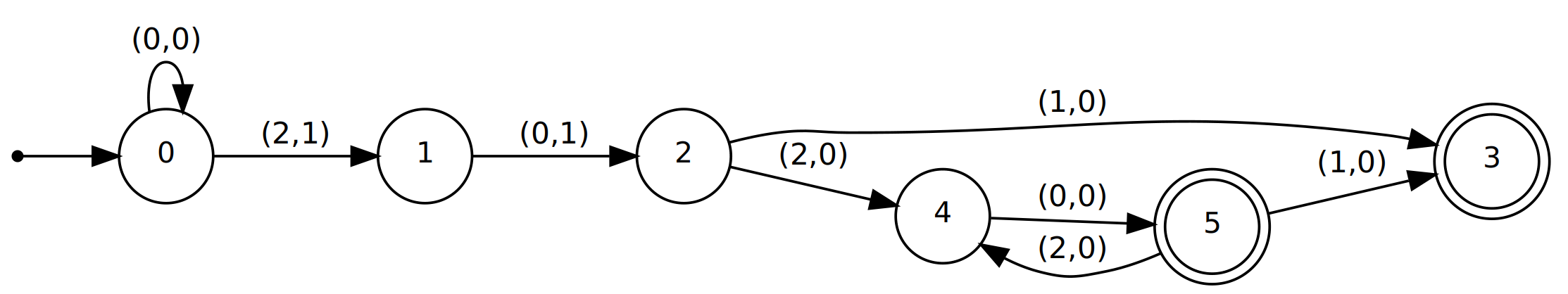}}
    \caption{Pairs $(n,p)$ satisfying the predicate \texttt{HighestPowers}.}
    \label{highestpow}
\end{figure}

\autoref{highestpow} shows the automaton produced by the predicate \texttt{HighestPowers}. It accepts pairs $(n,p)$ of the following forms:
\begin{align}
\label{case1}
& \binom{0}{0}^* \binom{2}{1} \binom{0}{1} \binom{1}{0},\\
\label{case2}
& \binom{0}{0}^* \binom{2}{1} \binom{0}{1} \binom{2}{0} \binom{0}{0} \bigg \{ \binom{2}{0} \binom{0}{0} \bigg \}^*,\text{ or}\\
\label{case3}
& \binom{0}{0}^* \binom{2}{1} \binom{0}{1} \binom{2}{0} \binom{0}{0} \bigg \{ \binom{2}{0} \binom{0}{0} \bigg \}^* \binom{1}{0}.
\end{align}

Here, the length of the words is $l=n+p$ and the period is $p$.  Eq.~\eqref{case1} corresponds to $n=(201)_P=11$ and $p=(110)_P=7$. Thus we have
$$e = \frac{l}{p} = \frac{n+p}{p} = \frac{18}{7} \approx 2.57.$$
Eq.~\eqref{case2} corresponds to
$$n=\sum_{1 \le i \le k}2P_{2k} = P_{2k+1}-1, \text{\ \ } p=P_{2k} + P_{2k-1}.$$
Eq.~\eqref{case3} corresponds to
$$n=1+\sum_{1 \le i \le k}2P_{2k+1} = P_{2k+2}-1, \text{\ \ } p=P_{2k+1} + P_{2k}.$$

Putting $m=2k-1$ for (\ref{case2}), and $m=2k$ for (\ref{case3}), we notice that the expressions for $n$ and $p$ coincide.
\begin{align*}
  e &= \frac{P_{m+2}+P_{m+1}+P_m-1}{P_{m+1}+P_m} \\
  &= 2 + \frac{P_{m+1}-1}{P_{m+1}+P_m}.
\end{align*}
Since Pell numbers are the convergents of $\sqrt{2}-1$, and the ratio $P_{m+1}/P_m$ converges to $\sqrt{2}+1$, we have that
\begin{align}
e & =2 + \frac{P_{m+1}-1}{P_{m+1}+P_m} \nonumber\\
\label{eqn:critexp}
& <2+\frac{\sqrt{2}+1+1/P_{m}^2-1/P_{m}}{\sqrt{2}+2-1/P_{m}^2} \ .
\end{align}

For $m \ge 4$, as $m\rightarrow\infty$, the value in Eq.~\eqref{eqn:critexp} is increasing, and tends to $2+\sqrt{2}/2$. Thus, the critical exponent of the word ${\bf r}$ is $2 + \sqrt{2}/2$. The \texttt{Walnut} commands for verifying richness and computing the critical exponent are available on \href{https://github.com/aseemrb/Walnut/blob/master/Command Files/rich2.txt}{GitHub}.\footnote{URL: \url{https://github.com/aseemrb/Walnut/blob/master/Command Files/rich2.txt} .}

\subsection{Optimality of the critical exponent}
A backtracking computation shows that the longest rich binary word with critical exponent $< 2.700$ is of length 1339. Combining this with the result above, we obtain the following bounds.
$$2.700\le RT(2) \le 2 + \frac{\sqrt{2}}{2}=2.7071\ldots$$

\subsection{Larger alphabets}
For an alphabet of size $k=3$, backtracking search shows that $RT(3) \ge 9/4$. The longest word that has a critical exponent $<9/4$ is of length 114. For $k=4$ and the exponent threshold $11/5$, our search program has reached words of length 3800 and has not terminated.

\section{Faster backtracking}\label{faster:backtrack}

In this section, we discuss some methods to optimize our backtracking algorithm. The most obvious optimization is to consider the following.
\begin{enumerate}
    \item Without loss of generality, we assume that the word starts with a 0.
    \item We impose the restriction that the first occurrence of the symbol $a$ occurs before the first occurrence of symbol $b$ if $a < b$.
\end{enumerate}

\subsection{Lyndon method}
Since our goal is to check if there is an infinite rich word with critical exponent less than a preset threshold, we can utilize the Lyndon method to prune certain branches of the backtracking search tree. A Lyndon word is a primitive nonempty word that is strictly smaller in lexicographic order than all of its rotations. If a word satisfies the properties of richness and the critical exponent being less than some threshold, then all factors of the word also satisfy these properties. This fact helps us by pruning those paths in the search tree that lead to a suffix that is lexicographically smaller than the word itself.

\subsection{Counting palindromes}
To check for richness, Groult et al. give a linear time algorithm to count the number of distinct palindromes in a word~\cite{Groult&Prieur&Richomme:2010}. Their algorithm is based on two major ideas: a linear-time algorithm by Gusfield to compute all maximal palindromes in a word~\cite{Gusfield:1997}, and a linear-time algorithm by Crochermore and Ilie to compute the LPF (longest previous factor) array~\cite{Crochemore:2008}. However, their approach is not helpful to our problem since it requires linear pre-processing time.

What we require is a fast online algorithm such that given the number of distinct palindromes for a word $w$ over an alphabet $\Sigma$, we can find the number of distinct palindromes in the word $wa$ for all $a\in \Sigma$ in constant amortized time. Such an algorithm is given by Rubinchik and Shur~\cite{Rubinchik&Shur:2016}. Their primary idea is to construct a graph where each node represents a unique palindrome.
There are two types of edges in this graph:
\begin{enumerate}
    \item \textbf{Border edge}: This is a directed edge from $p$ to $q$ labeled $a$, if $q=apa$ for some $a\in \Sigma$.
    \item \textbf{Suffix edge}: This is an unlabeled directed edge from $p$ to $q$, if $q$ is the longest proper palindromic suffix of $p$.
\end{enumerate}
Whenever we append a new symbol to an already processed word, it takes amortized constant time to maintain this graph. The \texttt{C++} implementation of the algorithm can be found on \href{https://github.com/aseemrb/research-scripts/blob/master/scripts/palin.cpp}{GitHub}.\footnote{URL: \url{https://github.com/aseemrb/research-scripts/blob/master/scripts/palin.cpp} .}

\begin{figure}[ht]
    \center{\includegraphics[width=0.6\textwidth]{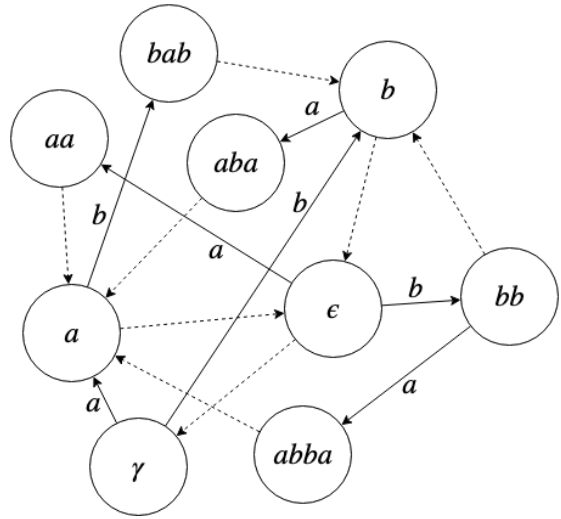}}
    \caption{The graph of palindromes for the word $w=aababba$. Here $\epsilon$ is the empty word and $\gamma$ is the imaginary palindrome word of length $-1$~\cite{Rubinchik&Shur:2016}.}
    \label{fig:palindrome:graph}
\end{figure}

\begin{example}
Figure \ref{fig:palindrome:graph} shows the graph construction for the rich word $aababba$. The number of nonempty palindromes is equal to 7. Note that we have an imaginary word $\gamma$ that has length $-1$ and is a palindrome. The suffix edges are shown by dashed lines, while the border edges are shown with solid lines having labels. We say that a palindrome consisting of a single symbol borders $\gamma$, which makes the implementation of the algorithm easy.
\end{example}

\subsection{Computing maximal runs}
In~\cite{Chen:2007}, Chen et al. present a survey of fast space-efficient algorithms for computing all maximal runs in a string. They also propose some new and faster algorithms for the same. In future work, we aim to understand and implement these algorithms in our backtracking search, so that we are able to compute tighter lower bounds on the repetition threshold more efficiently.

\section{Future prospects}
An obvious direction for further research is to develop novel ideas and methods that may help us prove lower bounds on the repetition threshold of infinite rich words. Another possible direction is to construct infinite rich words over larger alphabets that may serve as candidates for the repetition threshold.

% \nocite{*}

% ---- Bibliography ----
%
% BibTeX users should specify bibliography style 'splncs04'.
% References will then be sorted and formatted in the correct style.
%
\bibliographystyle{splncs04}
\bibliography{abbrevs,references}

\newcommand{\noopsort}[1]{} \newcommand{\singleletter}[1]{#1}
\begin{thebibliography}{10}
\providecommand{\url}[1]{\texttt{#1}}
\providecommand{\urlprefix}{URL }
\providecommand{\doi}[1]{https://doi.org/#1}

\bibitem{Angluin:1987}
Angluin, D.: Learning regular sets from queries and counterexamples.
  Information and Computation  \textbf{75}(2),  87--106 (1987)

\bibitem{Baranwal&Shallit:2019}
Baranwal, A.R., Shallit, J.: Critical exponent of infinite balanced words via
  the {Pell} number system. Preprint: \url{https://arxiv.org/abs/1902.00503}
  (2019)

\bibitem{Brlek&Hamel&Nivat&Reutenauer:2004}
Brlek, S., Hamel, S., Nivat, M., Reutenauer, C.: On the palindromic complexity
  of infinite words. Internat. J. Found. Comp. Sci.  \textbf{15},  293--306
  (2004)

\bibitem{Bucci&DeLuca&Glen&Zamboni:2009}
Bucci, M., {De Luca}, A., Glen, A., Zamboni, L.Q.: A new characteristic
  property of rich words. Theoret. Comput. Sci.  \textbf{410},  2860--2863
  (2009)

\bibitem{Chen:2007}
Chen, G., Puglisi, S.J., Smyth, W.F.: Fast \& practical algorithms for
  computing all the runs in a string. In: Ma, B., Zhang, K. (eds.) CPM 07,
  LNCS, vol.~4580, pp. 307--315. Springer-Verlag (2007)

\bibitem{Crochemore:2008}
Crochemore, M., Ilie, L.: Computing longest previous factor in linear time and
  applications. Inform. Process. Lett.  \textbf{106}(2),  75--80 (2008)

\bibitem{Currie&Rampersad:2011}
Currie, J., Rampersad, N.: A proof of {D}ejean's conjecture. Math. Comp.
  \textbf{80}(274),  1063--1070 (2011)

\bibitem{deLuca&Glen&Zamboni:2008}
{de Luca}, A., Glen, A., Zamboni, L.Q.: Rich, {Sturmian}, and trapezoidal
  words. Theoret. Comput. Sci.  \textbf{407},  569--573 (2008)

\bibitem{Dejean:1972}
Dejean, F.: Sur un {th\'eor\`eme} de {T}hue. J. Combin. Theory. Ser. A
  \textbf{13}(1),  90--99 (1972)

\bibitem{Glen&Justin&Widmer&Zamboni:2009}
Glen, A., Justin, J., Widmer, S., Zamboni, L.Q.: Palindromic richness. European
  J. Combinatorics  \textbf{30},  510--531 (2009)

\bibitem{Groult&Prieur&Richomme:2010}
Groult, R., Prieur, E., Richomme, G.: Counting distinct palindromes in a word
  in linear time. Inform. Process. Lett.  \textbf{110},  908--912 (2010)

\bibitem{Guo&Shallit&Shur:2016}
Guo, C., Shallit, J., Shur, A.M.: Palindromic rich words and run-length
  encodings. Inform. Process. Lett.  \textbf{116},  735--738 (2016)

\bibitem{Gusfield:1997}
Gusfield, D.: Algorithms on Strings, Trees, and Sequences: Computer Science and
  Computational Biology. Cambridge University Press (1997)

\bibitem{Mousavi:2016}
Mousavi, H.: Automatic theorem proving in {W}alnut. Preprint:
  \url{https://arxiv.org/abs/1603.06017} (2016)

\bibitem{Ostrowski:1922}
Ostrowski, A.: Bemerkungen zur {T}heorie der diophantischen {A}pproximationen.
  Abh. Math. Semin. Univ. Hamburg  \textbf{1}(1),  77--98 (1922)

\bibitem{Pelantova&Starosta:2013}
{Pelantov\'a}, E., Starosta, S.: Languages invariant under more symmetries:
  Overlapping factors versus palindromic richness. Discrete Math.
  \textbf{313},  2432--2445 (2013)

\bibitem{Rampersad:2018}
Rampersad, N., Shallit, J., Vandomme, E.: Critical exponents of infinite
  balanced words. Theoret. Comput. Sci.  (2018)

\bibitem{Rao:2011}
Rao, M.: Last cases of {D}ejean's conjecture. Theoret. Comput. Sci.
  \textbf{412}(27),  3010--3018 (2011)

\bibitem{Rubinchik&Shur:2016}
Rubinchik, M., Shur, A.M.: {EERTREE}: An efficient data structure for
  processing palindromes in strings. In: Lipt{\'a}k, Z., Smyth, W.F. (eds.)
  Combinatorial Algorithms. pp. 321--333. Springer International Publishing,
  Cham (2016)

\bibitem{Schaeffer&Shallit:2016}
Schaeffer, L., Shallit, J.: Closed, palindromic, rich, privileged, trapezoidal,
  and balanced words in automatic sequences. Electronic J. Combinatorics
  \textbf{23} (2016)

\bibitem{Vesti:2014}
Vesti, J.: Extensions of rich words. Theoret. Comput. Sci.  \textbf{548},
  14--24 (2014)

\bibitem{Vesti:2017}
Vesti, J.: Rich square-free words. Theoret. Comput. Sci.  \textbf{687},  48--61
  (2017)

\end{thebibliography}

\end{document}